\title[Homotopy category of projectives and Brown representability]
{The dual of the homotopy category of projective modules satisfies Brown representability}
\author{George Ciprian Modoi}
\address{Babe\c s--Bolyai University, Faculty of Mathematics and Computer Science \\
1, Mihail Kog\u alniceanu, 400084 Cluj--Napoca, Romania}
\email{cmodoi@math.ubbcluj.ro}
\thanks{Research supported by CNCS-UEFISCDI grant PN-II-RU-TE-2011-3-0065}
\subjclass[2010]{18E30, 16D90, 55U35}
\keywords{Brown representability, homotopy category, projective module}
\date{\today}
\newcommand{\la}{\longrightarrow}
\newcommand{\N}{\mathbb{N}}
\newcommand{\Z}{\mathbb{Z}}
\newcommand{\Q}{\mathbb{Q}}
\DeclareMathOperator{\Hom}{Hom}
\DeclareMathOperator{\holim}{\underleftarrow{\textrm{holim}}}
\DeclareMathOperator{\Ho}{H}
\newcommand{\A}{\mathcal{A}}
\newcommand{\C}{\mathcal{C}}
\newcommand{\CS}{\mathcal{S}}
\newcommand{\T}{\mathcal{T}}
\newcommand{\U}{\mathcal{U}}
\newcommand{\G}{\mathcal{G}}
\newcommand{\ModR}{\hbox{\rm Mod-}R}
\newcommand{\FlatR}{\hbox{\rm Flat-}R}
\newcommand{\ProjR}{\hbox{\rm Proj-}R}
\newcommand{\Ab}{\mathcal{A}b}
\newcommand{\opp}{^\textit{o}}
\newcommand{\Prod}[1]{\mathrm{Prod}({#1})}
\newcommand{\Prd}{\mathrm{Prod}}
\newcommand{\Htp}[1]{\mathbf{K}({#1})}
\theoremstyle{plain}
\newtheorem{thm}{Theorem}
\newtheorem{lem}[thm]{Lemma}
\newtheorem{cor}[thm]{Corollary}
\theoremstyle{definition}
\theoremstyle{remark}
\newtheorem{rem}[thm]{Remark}
\newenvironment{pf}{\noindent {\it Proof of}}
\begin{document}

\begin{abstract}
We show that the dual of the homotopy category of projective modules over an arbitrary ring satisfies 
Brown representability. 
\end{abstract}

\maketitle 


\section{Introduction and the main result}

This short note belongs to a series of papers which deal with Brown representability.
In \cite{MP} we gave a new proof of the fact that a well--generated triangulated category 
satisfies Brown representability, by using that every object is a homotopy colimit of 
a suitable chosen directed tower of objects constructed starting with the generators. 
Next we adapted in \cite{MD} this method in the sense 
made precise in Lemma \ref{decon} bellow, and we found a formal criterion for 
the dual of Brown representability in a triangulated category 
with products. This formal result was used first in the same paper \cite{MD},
for characterizing when the homotopy category of complexes of all modules satisfy the dual 
of Brown representability, and second in \cite{MBD} in order to show that the derived category 
of a Grothendieck category, 
satisfying certain additional hypothesis, satisfies the dual of Brown representability. In 
the present work we use the same instrument for proving that the dual of the homotopy category of 
projective module over an arbitrary ring satisfies Brown representability, confirming once again 
the usefulness of our formal result.  Note that the homotopy category of projectives is a key 
ingredient of the new point of view over Grothendieck duality given by Neeman in \cite{NKF}. 
Next in \cite{NKP},
the same author constructed a set of cogenerators in of this homotopy category, which was shown to 
be $\aleph_1$--compactly generated but, in general, not compactly generated.

Let $R$ be a ring (associative with one). In the sequel we shall work with the category of 
(complexes up to homotopy of) right $R$--modules. Thus the word ``module'' means ``right module''
and whenever we have to deal with left modules we state it explicitly. We denote by $\ModR$ 
the category of all modules, and we consider the full subcategories $\FlatR$ and $\ProjR$ 
consisting of flat, respectively projective modules. Complexes (of modules) are 
cohomologically graded, that is a complex is a sequence of the form
\[X=\cdots\to X^{n-1}\stackrel{d^{n-1}}\la X^n\stackrel{d^n}\la X^{n+1}\to\cdots\]
with $X^n\in\ModR$, $n\in\Z$, and $d^nd^{n-1}=0$. Morphisms of complexes are collections of linear 
maps commuting with differentials. Two maps of complexes $(f^n)_{n\in\Z},(g^n)_{n\in\Z}:X\to Y$ are 
homotopically equivalent if there are $s^n:X^n\to Y^{n-1}$, for all
$n\in\Z$, such that $f^n-g^n=d_Y^{n-1}s^n+s^{n+1}d_X^n$.
The homotopy category $\Htp\ModR$ has as objects 
all complexes and as morphisms equivalence classes of morphisms of complexes up to homotopy. 
It is well--known 
that $\Htp\ModR$ is a triangulated category with (co)products. Its suspension functor, 
denoted by $[1]$, is defined as follows: $X[1]^n=X^{n+1}$ 
and $d_{X[1]}^n=-d_X^{n+1}$. Let denote $\Htp\FlatR$ and 
$\Htp\ProjR$ the full subcategories of $\Htp\ModR$ consisting of those complexes which are 
isomorphic to a complex with flat, respectively projective, entries. Then  
$\Htp\FlatR$ and $\Htp\ProjR$ are triangulated subcategories of $\Htp\ModR$ (more generally, 
the same is true if we start with any additive subcategory of $\ModR$).

If $R$ and $S$ are rings, $X$ is a complex of $R$--$S$--bimodules and $V$ is a right $S$--module 
we denote by $\Hom_S(X,V)$ the complex of right $R$--modules:
\[\Hom_S(X,V)=\cdots\to\Hom_S(X^{n+1},V)\to\Hom_S(X^n,V)\to\cdots\]
where the differentials are the induced ones.

Let $\T$ be a triangulated category, and let $\A$ be an abelian category. 
We call {\em (co)homological} a (contravariant) functor 
$F:\T\to\A$ which sends triangles into long 
exact sequences. Denote by $\Ab$ the category of abelian groups. 
If $\T$ has coproducts (products) we say as in \cite{NR} that 
$\T$ (respectively, $\T\opp$) satisfies Brown representability, 
if every cohomological (homological) functor $F:\T\to\Ab$ which sends 
coproducts into products (preserves products) is representable. 

Now it is time to state our main result:

\begin{thm}\label{brt4kproj}
If $R$ is a ring then the category $\Htp\ProjR\opp$ satisfies Brown representability.
\end{thm}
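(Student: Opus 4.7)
The plan is to verify the hypotheses of Lemma \ref{decon}, the formal criterion from \cite{MD} that reduces dual Brown representability in a triangulated category with products to a decomposition statement. Explicitly, the criterion asks for a set $\CS \subseteq \Htp\ProjR$ such that every object $P$ admits a presentation as a homotopy limit of an inverse tower whose successive fibres are built from products of shifts of elements of $\CS$.

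For the set $\CS$, I would take (shifts of) complexes of countably generated projective modules; these form a set up to isomorphism, and are the natural candidates in view of the $\aleph_1$--compact (co)generators constructed by Neeman in \cite{NKP}. For the tower associated to a given $P \in \Htp\ProjR$, I would use the brutal truncations $P_n = \sigma^{\leq n} P$, defined by $(P_n)^i = P^i$ for $i \leq n$ and $(P_n)^i = 0$ otherwise, together with the canonical componentwise surjections $P_{n+1} \twoheadrightarrow P_n$. A degreewise computation shows that the inverse limit in $\Cpx{\ModR}$ is $P$ itself, and a direct identification shows that the fibre of $P_{n+1} \to P_n$ is the stalk complex $P^{n+1}[-n-1]$.

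It then remains, on the one hand, to check that the ordinary inverse limit coincides with the homotopy limit of the tower in $\Htp\ProjR$, and on the other hand, to express each stalk fibre in a form compatible with Lemma \ref{decon}. For the latter, Kaplansky's theorem decomposes the projective module $P^{n+1}$ as a direct sum of countably generated projectives, suggesting how each fibre is to be built from $\CS$. The main obstacle, which is the technical heart of the proof, is that products in $\Htp\ProjR$ do not agree with componentwise products as they do in $\Htp\ModR$: a componentwise product of projectives is only flat, not in general projective, so the intrinsic product in $\Htp\ProjR$ involves an additional projective replacement. Controlling this replacement---drawing on Neeman's detailed analysis of $\Htp\ProjR$ from \cite{NKF} and \cite{NKP}---is where the bulk of the technical work will concentrate, and is what allows the verification of the hypotheses of Lemma \ref{decon} to go through.
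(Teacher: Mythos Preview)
Your proposal has a genuine gap rooted in a product/coproduct mismatch. Lemma~\ref{decon} requires every object to be $\CS$-\emph{cofiltered}: a homotopy limit of a tower $(X_n)$ with $X_1\in\Prd_1(\CS)=\Prod\CS$ and each fibre of $X_{n+1}\to X_n$ again in $\Prod\CS$. Your brutal-truncation tower $P_n=\sigma^{\leq n}P$ already fails the first condition: $P_1=\sigma^{\leq 1}P$ is an arbitrary bounded-above complex of projectives, and nothing you say explains why it should be a retract of a \emph{product} (in $\Htp\ProjR$) of objects of $\CS$. For the fibres $P^{n+1}[-n-1]$ you invoke Kaplansky, but Kaplansky expresses a projective module as a \emph{coproduct} of countably generated projectives; membership in $\Prod\CS$ asks instead that it be a summand of a \emph{product} of such, and you offer no bridge between the two. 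The difficulty you flag with products in $\Htp\ProjR$ is not a technical wrinkle to be smoothed over later---it is exactly the point at which the truncation-plus-Kaplansky strategy breaks, because the entire construction is coproduct-flavoured while the criterion of Lemma~\ref{decon} is product-flavoured.

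The paper's argument is product-based from the outset and uses no truncations. The set $\CS$ is taken to consist of Neeman's explicit cogenerators $J(\Hom_{\Z}(I,\Q/\Z))$ for $I$ a test-complex \cite{NKP}, and the tower attached to an object $Y\in\T$ is built by iterated $\Prod\CS$-preenvelopes (Lemmas~\ref{preen}, \ref{z2} and~\ref{z3}, which refine \cite[Lemmas~4.4--4.6]{NKP}). Since a preenvelope is by construction a map into a product, each stage $Z_n$ lies in $\Prd_{2n}(\CS)$ automatically, and the identification $Y\cong\holim Z_n$ for $Y\in\T$ is supplied by the argument of \cite[Theorem~4.7]{NKP}.
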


\section{The proof}

The first ingredient in the proof of the main theorem of this paper is contained in 
\cite{MD}. Here we recall it shortly.
Fix $\T$ to be a triangulated category with products, and denote by $[1]$ its suspension functor. 
Recall that if \[X_1\leftarrow X_2\leftarrow X_3\leftarrow \cdots \] is an inverse tower 
(indexed over $\N$) of objects in $\T$, then its 
homotopy limit is defined (up to a non--canonical isomorphism) by the triangle 
\[\holim X_n\la\prod_{n\in\N^*}X_n\stackrel{1-shift}\la\prod_{n\in\N^*}X_n\to\holim X_n[1], \]
(see \cite[dual of Definition 1.6.4]{N}).

Consider a set of objects in $\T$ and denote it by $\CS$. We define $\Prod\CS$ to be the full
subcategory of $\T$ consisting of all direct factors of products of objects in $\CS$.
Next we define inductively $\Prd_1(\CS)=\Prod\CS$ and $\Prd_n(\CS)$ is
the full subcategory of $\T$ which consists of all objects $Y$
lying in a triangle $X\to Y\to Z\to X[1]$ with $X\in\Prd_1(\CS)$
and $Y\in\Prd_n(\CS)$. Clearly the construction leads to an ascending chain
$\Prd_1(\CS)\subseteq\Prd_2(\CS)\subseteq\cdots$. We suppose that $\CS$ is closed under
suspensions and desuspensions, hence the same is true for
$\Prd_n(\CS)$, by \cite[Remark 07]{NR}.  The same
\cite[Remark 07]{NR} says, in addition, that if $X\to Y\to Z\to X[1]$ is a
triangle with $X\in\Prd_n(\CS)$ and $\Prd_m(\CS)$ then
$Z\in\Prd_{n+m}(\CS)$. An object $X\in\T$ will be called
{\em$\CS$-cofiltered} if it may be written as a homotopy limit 
$X\cong\holim X_n$ of an inverse tower, with $X_1\in\Prd_1(\CS)$, and
$X_{n+1}$ lying in a triangle $P_{n}\to X_{n+1}\to X_n\to P_n[1],$
for some $P_n\in\Prd_1(\CS)$. Inductively we have
$X_n\in\Prd_n(\CS)$, for all $n\in\N^*$. 
The dual notion must be surely called {\em filtered}, and the terminology comes 
from the analogy with the filtered objects in an abelian category (see \cite[Definition 3.1.1]{GT}). 
Using further the same analogy, we say that $\T$ (respectively, $\T\opp$) is {\em deconstructible} if 
$\T$ has coproducts (products) and there is a set, which is not a proper class, of objects $\CS$ closed under suspensions and desuspensions 
such that every object $X\in\T$ is $\CS$--filtered (cofiltered). Note that we may define 
deconstructibility without closure under suspensions and desuspension, Indeed if every $X\in\T$ is 
$\CS$--(co)filtered, then it is also $\overline{\CS}$--(co)filtered, where  $\overline{\CS}$ is the closure of $\CS$ 
under suspensions and desuspensions.

\begin{lem}\label{decon}\cite[Theorem 8]{MD}
If $\T\opp$ is deconstructible, then $\T\opp$ satisfies Brown representability.
\end{lem}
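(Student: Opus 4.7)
The plan is to mimic the classical Brown representability argument for compactly generated
triangulated categories, dualized to use homotopy inverse limits in place of homotopy
colimits. Given a homological, product-preserving $F\colon\T\to\Ab$, I would produce an
object $X\in\T$ together with a ``universal'' element $\xi\in F(X)$ such that the natural
transformation $\eta\colon\Hom_\T(X,-)\to F$ given by $\eta_Y(\phi)=F(\phi)(\xi)$ is an
isomorphism. The object $X$ will be assembled as a homotopy inverse limit of a tower
$X_1\leftarrow X_2\leftarrow\cdots$ with $X_n\in\Prd_n(\CS)$, dovetailing with the
cofiltration supplied by the deconstructibility hypothesis on $\T\opp$.

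For the base step I set $X_1=\prod_{(S,s)}S$ over all pairs with $S\in\CS$ and $s\in F(S)$
(a set, since $\CS$ is a set and each $F(S)$ lies in $\Ab$). Product-preservation of $F$
identifies $F(X_1)=\prod F(S)$, so there is a canonical $\xi_1$ with $(S,s)$-component $s$
making the induced $\eta_1$ surjective at every $S\in\CS$. Inductively, given
$(X_n,\xi_n,\eta_n)$, I would ``kill'' the kernel of $\eta_n$ on $\CS$ by forming
$Z_n=\prod_{(S,f)}S$ over the set of pairs with $f\colon X_n\to S$, $S\in\CS$ and
$F(f)(\xi_n)=0$, and letting $X_{n+1}$ fit into the triangle
$Z_n[-1]\to X_{n+1}\to X_n\to Z_n$. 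Closure of $\CS$ under desuspension places
$X_{n+1}\in\Prd_{n+1}(\CS)$, and the vanishing $F(X_n\to Z_n)(\xi_n)=0$ combined with
homologicity of $F$ lifts $\xi_n$ to some $\xi_{n+1}\in F(X_{n+1})$, while surjectivity
of $\eta$ on $\CS$ is preserved. Setting $X=\holim X_n$, the defining triangle
$X\to\prod X_n\xrightarrow{1-\mathrm{shift}}\prod X_n$, together with product-preservation
and homologicity of $F$, assembles the compatible family $(\xi_n)$ to a single
$\xi\in F(X)$, which yields the desired $\eta$.

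It remains to check that $\eta$ is an isomorphism, in two stages. First on $S\in\CS$:
surjectivity of $\eta_S$ is inherited from $\eta_1$ by composing a realising map
$X_1\to S$ with the structure morphism $X\to X_1$; injectivity relies on a Milnor-type
long exact sequence obtained by applying $\Hom_\T(-,S)$ to the $\holim$-triangle, the
iterated kill-the-kernel step ensuring that any $\phi\colon X\to S$ with
$F(\phi)(\xi)=0$ is forced to vanish. Then the isomorphism is extended from $\CS$ to all
of $\T$ using the deconstructibility of $\T\opp$: product-preservation of both sides
pushes it to $\Prd_1(\CS)$ and its direct summands; a five-lemma applied to the long
exact sequences attached to the triangles defining $\Prd_n(\CS)$ propagates it to every
$\Prd_n(\CS)$, using shift-closure of $\CS$; and for a general $\CS$-cofiltered
$Y=\holim Y_m$, comparing the long exact sequences obtained by applying $\Hom_\T(X,-)$
and $F$ to the defining holim-triangle of $Y$ concludes, again thanks to
product-preservation on both sides. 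The principal obstacle is the injectivity step:
since $\Hom_\T(-,S)$ does not commute with products, controlling $\Hom_\T(X,S)$ via the
tower $\{\Hom_\T(X_n,S)\}$ requires carefully unwinding the $\holim$-triangle and
verifying that the kill-the-kernel construction leaves no residual classes in any
$\lim^{1}$-style correction term.
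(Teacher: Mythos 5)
Your overall architecture (universal tower, kill-the-kernel step, passage to $\holim$, then bootstrapping from $\CS$ to all of $\T$) is the right family of ideas, and the first half of your construction is sound: since $F$ preserves products, $F(\prod_n X_n)\cong\prod_n F(X_n)$, so the compatible family $(\xi_n)$ does lift to some $\xi\in F(X)$ along the exact sequence obtained by applying $F$ to the $\holim$-triangle, and $\eta_S$ is surjective for $S\in\CS$. The final propagation is also fine \emph{granted} an isomorphism on $\CS$: both $\Hom_\T(X,-)$ and $F$ preserve products, so the five lemma carries the isomorphism through $\Prd_1(\CS)$, through the triangles defining $\Prd_n(\CS)$ (using closure of $\CS$ under shifts), and through the $\holim$-triangle of a $\CS$-cofiltered object.

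The argument breaks down, however, exactly at the point you flag as ``the principal obstacle'', and this is not a technicality that careful unwinding will remove --- it is the reason the classical Brown representability proof does not dualize. Your kill-the-kernel step forces $\phi\colon X\to S$ with $F(\phi)(\xi)=0$ to vanish only if $\phi$ factors through one of the canonical maps $X\to X_n$, i.e.\ only if $\colim\Hom_\T(X_n,S)\to\Hom_\T(\holim X_n,S)$ is surjective. In the classical coproduct setting the analogous fact follows from compactness of the generators, $\Hom(S,\coprod -)\cong\coprod\Hom(S,-)$; dually you would need $S$ to be \emph{cocompact}, $\Hom_\T(\prod_n X_n,S)\cong\bigoplus_n\Hom_\T(X_n,S)$, which is not assumed and essentially never holds in categories such as $\Htp\ProjR$. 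Applying $\Hom_\T(-,S)$ to the $\holim$-triangle only produces the group $\Hom_\T(\prod_n X_n,S)$, which bears no relation to the tower $\{\Hom_\T(X_n,S)\}$, so there is no Milnor sequence and no residual $\lim^1$-term to control. The proof cited in \cite[Theorem 8]{MD} circumvents this by never establishing injectivity on $\CS$ directly: the deconstructibility hypothesis is used to prove that $\eta_Y$ is \emph{surjective for every object} $Y$ (lifting elements of $F(Y)$ step by step along the cofiltration $Y\cong\holim Y_m$), and representability is then deduced from this weaker statement by a separate formal argument in the spirit of Neeman's solution-object technique and Krause's perfect (co)generation. In other words, the hypothesis must enter the proof in a structurally different place from where you put it; as written, your injectivity step has no valid justification.
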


In order to apply this result to the category $\Htp\ProjR$ we shall use the set cogenerators 
of this category constructed
in \cite{NKP}.  We consider the subcategories $\Htp\ProjR^\perp$ and 
$\left(\Htp\ProjR^\perp\right)^\perp$ of $\Htp\FlatR$, 
where the symbol $^\perp$ is always meant in $\Htp\FlatR$, that is 
\begin{align*}\Htp\ProjR^\perp=\{X\in\Htp\FlatR\mid&\Htp\FlatR(P,X)=0\\ &
 \hbox{ for all }P\in\Htp\ProjR\}
\end{align*}
and similar for double perpendicular. By formal non--sense we know that there is an equivalence of 
categories
\[\Htp\FlatR/\Htp\ProjR^\perp\stackrel{\sim}\la\left(\Htp\ProjR^\perp\right)^\perp\]
thus \cite[Remark 2.16]{NKF} implies the existence of an equivalences of categories
\[\Htp\ProjR\stackrel{\sim}\la\left(\Htp\ProjR^\perp\right)^\perp.\]
Note that the cogenerators constructed in \cite{NKP} lie naturally not in 
$\Htp\ProjR$ but in the equivalent category $\left(\Htp\ProjR^\perp\right)^\perp$. This is the 
reason for which we shall work with this last category which will be denoted by $\T$. 

\begin{lem}\label{thaspd}
 The category $\T$ is has products. 
\end{lem}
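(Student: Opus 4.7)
My plan is to deduce the existence of products in $\T$ from the existence of products in the ambient category $\Htp\FlatR$, using only the formal fact that $\T$ is a right $^\perp$-orthogonal subcategory of $\Htp\FlatR$. That products exist in $\Htp\FlatR$ is itself a non-trivial point, since the degree-wise product in $\Htp\ModR$ of complexes with flat entries need not have flat entries; however, this is available from Neeman's work cited in the introduction, where it is shown that the inclusion $\Htp\FlatR\hookrightarrow\Htp\ModR$ admits a right adjoint. The product in $\Htp\FlatR$ is then obtained by applying this right adjoint to the degree-wise product in $\Htp\ModR$. I would simply invoke this fact rather than reprove it.

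The remaining step is purely formal. Given a family $(X_i)_{i\in I}$ of objects in $\T=\left(\Htp\ProjR^\perp\right)^\perp$, let $P$ denote their product computed in $\Htp\FlatR$. For any $Y\in\Htp\ProjR^\perp$, the universal property of the product yields
\[
\Htp\FlatR(Y,P)\;\cong\;\prod_{i\in I}\Htp\FlatR(Y,X_i)\;=\;0,
\]
since each $X_i$ lies in $\left(\Htp\ProjR^\perp\right)^\perp$. Hence $P\in\T$; because $\T$ is a full subcategory of $\Htp\FlatR$, the universal property of $P$ in $\Htp\FlatR$ restricts verbatim to $\T$, so $P$ is the desired product of $(X_i)$ in $\T$.

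The main obstacle is accordingly not the formal verification just sketched but rather its input, namely the existence of products in $\Htp\FlatR$. An equivalent route would be to transport products along the equivalence $\Htp\ProjR\stackrel{\sim}\la\T$ recorded just before the lemma, in which case one needs the corresponding fact for $\Htp\ProjR$; either way, the nontrivial ingredient is a result of Neeman about products in one of these two homotopy categories, while the passage to $\T$ is a one-line perpendicularity argument.
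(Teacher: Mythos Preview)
Your argument is correct, but it is not the proof the paper gives for the lemma itself. There the author uses that $\T\simeq\Htp\ProjR$ is well--generated (by \cite[Theorem~1.1]{NKF}), hence satisfies Brown representability in the covariant sense; the product of a family $(X_i)_{i\in I}$ is then obtained by representing the cohomological, coproduct--to--product functor $\prod_{i\in I}\T(-,X_i)$. Your route---first establishing products in $\Htp\FlatR$ via the right adjoint of the inclusion $\Htp\FlatR\hookrightarrow\Htp\ModR$ from \cite{NA}, and then observing that a right $^\perp$--orthogonal class is automatically closed under ambient products---is exactly the alternative the paper records immediately afterwards in Remark~\ref{radj}. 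The two approaches simply trade one nontrivial input from Neeman for another (well--generation of $\Htp\ProjR$ versus the existence of the right adjoint for $\Htp\FlatR$); in either case the remaining passage to $\T$ is a formality, as you note.
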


\begin{proof}
We know that $\T\sim\Htp\ProjR$ is well--generated (see \cite[Theorem 1.1]{NKF}), 
hence it satisfies Brown representability.
The existence of products is a well--known consequence of this fact: If $(X_i)_{i\in I}$ is a 
family of objects in $\T$, then the cohomological functor $\prod_{i\in I}\T(-,X_i)$ sends 
coproducts in products, therefore it is representable (by the product of the family $(X_i)_{i\in I}$).
\end{proof}

\begin{rem}\label{radj}
 Another proof of Lemma \ref{thaspd} goes as follows: The very definition of $\T$ implies 
 that it is closed under products in $\Htp\FlatR$, and it remains to show that this last category 
 has products. But this follows immediately form the fact that the inclusion functor 
 $\Htp\FlatR\to\Htp\ModR$ has a right adjoint (see \cite[Theorem 3.2]{NA}). Indeed for obtaining 
 the product in $\Htp\FlatR$ we 
 have only to apply this right adjoint to the product in $\Htp\ModR$. 
\end{rem}

Recall that a {\em test--complex} is defined in \cite[Definition 1.1]{NKP}
to be a bounded below complex $I$ of injective left modules satisfying the additional properties that 
$\Ho^n(I)=0$ for all but finitely many $n\in\Z$ and for those $n$ for which $\Ho^n(I)\neq0$, this 
module is isomorphic to subquotient of a finitely generated projective module. Here by 
$\Ho^n(I)$ we understand the $n$-th (left) $R$--module of cohomology of the complex $I$. Note that there 
is only a set of test complexes up to homotopy equivalence (see also \cite[Remark 1.2]{NKP}).   
Let $J:\Htp\ModR\to\Htp\FlatR$ the right adjoint of the inclusion functor 
(see also Remark \ref{radj}). Define $\CS$ to be the full subcategory of $\Htp\FlatR$ which contains 
exactly the objects of the form $J(\Hom_{\Z}(I,\Q/\Z))$ where $I$ runs over a set of representatives up to 
homotopy equivalence of all test--complexes. Observe that $\CS$ is a set and $\CS\subseteq\T$ by 
\cite[Lemmas 2.2 and 2.6]{NKP}.  We plan to complete our proof by showing that 
$\T$ is $\CS$--cofiltred. In order to do that, we shall use the (proof of) \cite[Theorem 4.7]{NKP}. 
Recall from \cite[Construction 4.3 and Theorem 5.9]{NKF} that the full subcategory
$\G$ of $\Htp\FlatR$ which contains a set of representatives (again up to homotopy equivalence) 
for those $G\in\Htp\FlatR$ which 
are bounded below complexes with finitely generated projective entries generates $\Htp\FlatR$ 
as a triangulated subcategory. We recall also that if $\C'\subseteq\C$ is a full subcategory of 
any category $\C$, then  
a map $Y\to Z$ in $\C$ with $Z\in\C'$ is called {\em a $\C'$-preenvelope of $Y$}, provided that 
every other map $Y\to Z'$ with $Z'\in\C'$ factors through $Y\to Z$. 

The next three lemmas are refinements of \cite[Lemmas 4.4, 4.5 and 4.6]{NKP}.

\begin{lem}\label{preen} Every complex $Y\in\Htp\FlatR$ has a $\Prod\CS$--preenvelope. 
\end{lem}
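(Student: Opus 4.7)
The plan is to use the standard ``canonical map into a product'' construction for preenvelopes. First I would observe that the paper already supplies every ingredient needed: $\CS$ is a set (not a proper class), $\Htp\FlatR$ has products (by Remark \ref{radj}, obtained by applying the right adjoint $J$ of the inclusion $\Htp\FlatR\to\Htp\ModR$ to products computed in $\Htp\ModR$), and $\T$ is closed under products in $\Htp\FlatR$, so the product of a family of objects of $\CS$ is the same whether taken in $\T$ or in $\Htp\FlatR$. Hom-sets in $\Htp\FlatR$ are small because it is a quotient of an ordinary category of complexes by a congruence.

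Next I would form the indexing set
\[
\Lambda \;=\; \{(S,f) \mid S \in \CS,\ f \in \Htp\FlatR(Y,S)\},
\]
which is a genuine set by the previous remarks. Set $Z = \prod_{(S,f)\in\Lambda} S$ in $\Htp\FlatR$, and let $\varphi:Y\to Z$ be the canonical map whose $(S,f)$-component is $f$ itself. Since $Z\in\Prod\CS$ by construction, $\varphi$ is the candidate $\Prod\CS$--preenvelope.

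The remaining step is to verify the universal property. Given a morphism $g:Y\to Z'$ with $Z'\in\Prod\CS$, write $Z'$ as a direct summand of a product $\prod_{i\in I} T_i$ with $T_i\in\CS$, say with inclusion $\iota:Z'\to\prod T_i$ and retraction $\rho:\prod T_i\to Z'$. Composing $\iota\circ g$ with each projection yields a family of morphisms $g_i:Y\to T_i$, and each pair $(T_i,g_i)$ is an element of $\Lambda$. Hence each $g_i$ factors through $\varphi$ via the corresponding projection $Z\to T_i$; assembling these factorizations into a single map $Z\to\prod T_i$ and post-composing with $\rho$ yields the required factorization of $g=\rho\circ\iota\circ g$ through $\varphi$.

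I do not anticipate a serious obstacle: once the product is known to exist inside $\Htp\FlatR$, the argument is formal. The one point to watch is that the product must be computed in $\Htp\FlatR$ (through $J$), not in $\Htp\ModR$, since the entrywise product of flat modules need not be flat; but this is precisely what Lemma \ref{thaspd} and Remark \ref{radj} guarantee. The \emph{refinement} over \cite[Lemma 4.4]{NKP} is essentially that we use the specific cogenerating set $\CS$ of \cite{NKP} rather than a generic cogenerating class, and this does not affect the construction.
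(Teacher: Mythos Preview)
Your proposal is correct and follows essentially the same approach as the paper: form the product $Z=\prod_{S\in\CS,\alpha:Y\to S}S$ in $\Htp\FlatR$ and take the canonical map $Y\to Z$. Your version is slightly more detailed in that you explicitly verify the factorization property for an arbitrary $Z'\in\Prod\CS$ (including the direct-summand case), whereas the paper simply states the construction and declares the argument standard.
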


\begin{proof}
 The argument is standard: Let $Z=\prod_{S\in\CS,\alpha:Y\to S}S$ and $Y\to Z$ the unique map 
 making commutative the diagram:
 \[\diagram 
 Y\rrto\drto_{\alpha}&& Z\dlto^{p_{S,\alpha}}\\
 &S& 
 \enddiagram\] where $p_{S,\alpha}$ is the canonical projection for all $S\in\CS$ and all 
 $\alpha:Y\to S$. 
\end{proof}

\begin{rem}
In \cite[Lemma 4.4]{NKP} the map $Y\to Z$ from Lemma \ref{preen} is completed
to a triangle \[X\to Y\to Z\to X[1]\] and it is shown that the condition to be a 
$\Prod\CS$--preenvelope is equivalent to the fact that $X\to Y$ is a {\em tensor phantom map}, 
that is the induced map $X\otimes_RI\to Y\otimes_RI$ vanishes in cohomology for every test--complex 
$I$.
\end{rem}

\begin{lem}\label{z2}
 For every $Y\in\Htp\FlatR$ there is a triangle \[X\to Y\to Z\to X[1]\] such that $Z\in\Prd_2(\CS)$ 
 and the induced sequence 
 \[0\to\Htp\FlatR(G,Y)\to\Htp\FlatR(G,Z)\to\Htp\FlatR(G,X[1])\to0\]
is exact for all $G\in\G$. 
\end{lem}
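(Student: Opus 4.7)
The plan is to apply Lemma~\ref{preen} twice and combine the resulting triangles by the octahedral axiom, producing a third vertex $Z$ in $\Prd_2(\CS)$; the required exactness will then follow from the fact, noted in the remark after Lemma~\ref{preen}, that the fibre of a $\Prod\CS$-preenvelope is a tensor phantom map, together with a ``composition of two tensor phantoms vanishes on $\Htp\FlatR(G,-)$'' argument for $G\in\G$.

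First I apply Lemma~\ref{preen} to $Y$ and obtain a $\Prod\CS$-preenvelope $Y\to Z_1$ which I complete to a triangle $X_1\to Y\to Z_1\to X_1[1]$ with $Z_1\in\Prd_1(\CS)$. Next I apply Lemma~\ref{preen} to $X_1$ to obtain a $\Prod\CS$-preenvelope $X_1\to Z_2$ fitting in a triangle $X_2\to X_1\to Z_2\to X_2[1]$ with $Z_2\in\Prd_1(\CS)$. Applying the octahedral axiom to the composable pair $X_2\to X_1\to Y$ produces a triangle $X_2\to Y\to Z\to X_2[1]$ whose third vertex $Z$ sits in an auxiliary triangle $Z_2\to Z\to Z_1\to Z_2[1]$. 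Since $Z_1,Z_2\in\Prd_1(\CS)$, this last triangle shows $Z\in\Prd_2(\CS)$, and I set $X:=X_2$.

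It then remains to verify that the sequence $0\to\Htp\FlatR(G,Y)\to\Htp\FlatR(G,Z)\to\Htp\FlatR(G,X[1])\to0$ is exact for every $G\in\G$. Applying the functor $\Htp\FlatR(G,-)$ to the triangle $X\to Y\to Z\to X[1]$, this reduces to checking that both $\Htp\FlatR(G,X_2)\to\Htp\FlatR(G,Y)$ and $\Htp\FlatR(G,X_2[1])\to\Htp\FlatR(G,Y[1])$ vanish. By the remark after Lemma~\ref{preen}, each of $X_1\to Y$ and $X_2\to X_1$ is tensor phantom, so $X_2\to Y$ is a composite of two tensor phantom maps. The main obstacle is to show that any such composite vanishes on $\Htp\FlatR(G,-)$ when $G\in\G$. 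I expect this to follow by refining the argument of \cite[Lemma 4.6]{NKP}: for a bounded below complex of finitely generated projective modules $G$, homotopy classes of maps into $J(\Hom_{\Z}(I,\Q/\Z))$ are controlled by the cohomology of $G\otimes_R I$ for test complexes $I$, and the defining vanishing of the tensor phantoms on these cohomology groups, together with a truncation and lifting step coming from the boundedness of $G$, should kill the second phantom after composition. The same reasoning applied to the shift delivers the vanishing for $X_2[1]\to Y[1]$, which completes the verification of exactness.
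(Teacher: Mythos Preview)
Your construction is exactly the paper's: two applications of Lemma~\ref{preen} followed by the octahedral axiom to produce $Z\in\Prd_2(\CS)$, with only the labels differing. For the exactness the paper simply invokes \cite[Lemma~4.5]{NKP} (not~4.6), which is precisely the assertion that a composite of two tensor phantom maps induces zero on $\Htp\FlatR(G,-)$ for all $G\in\G$; your sketch is headed in the right direction, but there is no need to ``expect'' or ``refine'' anything---that lemma gives the vanishing outright.
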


\begin{proof} Use twice Lemma \ref{preen}: First consider a $\Prod\CS$--preenvelope $Y\to Z'$,
and complete it to a triangle $X'\to Y\to Z'\to X'[1]$. Let $X'\to Z''$ be again a 
$\Prod\CS$--preenvelope which is completed to a triangle $X\to X'\to Z''\to X[1]$. The octraedral 
axiom allows us to construct the commutative diagram whose rows and columns are triangles:
\[\diagram
             &Z'[-1]\rdouble\dto&Z'[-1]\dto &            \\
X\rto\ddouble&X'\rto\dto        &Z''\rto\dto&X[1]\ddouble\\
X\rto        &Y\rto\dto         &Z\rto\dto  &X[1]        \\
             &Z'\rdouble        &Z'         &
\enddiagram.\]
 Now the third row is the desired triangle, and the triangle 
 in the third column assures us that $Z\in\Prd_2(\CS)$. The rest of the conclusion follows by 
 \cite[Lemma 4.5]{NKP}.
\end{proof}

\begin{lem}\label{z3}
 Every map $Y\to Z$ in $\Htp\FlatR$ with $Z\in\Prd_n(\CS)$ factors as $Y\to Z'\to Z$, where 
 $Z'\in\Prd_{n+2}(\CS)$ and the induced maps 
 \[\Htp\FlatR(G,Y)\to\Htp\FlatR(G,Z)\]  \[\Htp\FlatR(G,Z')\to\Htp\FlatR(G,Z)\]
 have the same image, for all $G\in\G$. 
\end{lem}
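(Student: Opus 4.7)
My plan is to construct $Z'$ as the fiber of an appropriate morphism $Z\to V$ with $V\in\Prd_2(\CS)$, obtained from Lemma~\ref{z2} applied to the cone of $f$.

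Concretely, denote by $C$ the cone of $f:Y\to Z$ in $\Htp\FlatR$, with defining triangle $Y\xrightarrow{f}Z\xrightarrow{q}C\to Y[1]$. Applying Lemma~\ref{z2} to $C$ produces a triangle $X'\to C\xrightarrow{p}V\to X'[1]$ with $V\in\Prd_2(\CS)$ and with $p_{*}:\Htp\FlatR(G,C)\to\Htp\FlatR(G,V)$ injective for every $G\in\G$. Set $\rho:=p\circ q:Z\to V$ and complete $\rho$ to a triangle
\[V[-1]\to Z'\to Z\xrightarrow{\rho}V.\]
Since $\CS$, and hence $\Prd_2(\CS)$, is closed under desuspensions, $V[-1]\in\Prd_2(\CS)$; combined with $Z\in\Prd_n(\CS)$, the extension property of the $\Prd_{\bullet}(\CS)$ filtration yields $Z'\in\Prd_{n+2}(\CS)$.

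Next I would verify the factorization: the composite $\rho\circ f=p\circ(q\circ f)$ vanishes because $q\circ f=0$, so the long exact sequence associated to the above triangle lifts $f$ to a map $Y\to Z'$ whose composition with $Z'\to Z$ recovers $f$. For the image equality, the same long exact sequence gives
\[\mathrm{Im}\bigl(\Htp\FlatR(G,Z')\to\Htp\FlatR(G,Z)\bigr)=\ker(\rho_{*}).\]
Because $\rho_{*}=p_{*}\circ q_{*}$ and $p_{*}$ is injective, $\ker(\rho_{*})=\ker(q_{*})$; by the long exact sequence of the triangle $Y\to Z\to C\to Y[1]$, this last kernel is exactly $\mathrm{Im}\bigl(\Htp\FlatR(G,Y)\to\Htp\FlatR(G,Z)\bigr)$, yielding the required equality.

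The main obstacle, modest as it is, lies in identifying which auxiliary object should be fed into Lemma~\ref{z2}: the cone $C$ of $f$ is the natural candidate, because its $\Prd_2(\CS)$-preenvelope detects precisely the obstruction that $\G$-valued morphisms into $Z$ meet when trying to factor through $f$. After that choice, the verification reduces to standard bookkeeping with the long exact sequences attached to the two triangles.
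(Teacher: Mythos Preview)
Your proof is correct and is essentially the same as the paper's: your cone $C$, your $V\in\Prd_2(\CS)$, and your $Z'$ are exactly the paper's $X$, $Z''$, and $Z'$, obtained by the same two-step construction (take the cone of $f$, apply Lemma~\ref{z2} to it, then complete the composite $Z\to C\to V$ to a triangle). The only difference is that you explicitly carry out the factorization and image-equality verification via the long exact sequences, whereas the paper defers this part to \cite[Lemma 4.6]{NKP}.
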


\begin{proof}
 Complete $Y\to Z$ to a triangle $Y\to Z\to X\to Y[1]$ and let $X\to Z''$ as in Lemma \ref{z2}.
 Complete the composed map $Z\to X\to Z''$ to a triangle 
 \[Z'\to Z\to Z''\to Z'[1].\] It is clear that $Z'\in\Prd_{n+2}(\CS)$ and the rest of the proof is 
 the same as for \cite[Lemma 4.6]{NKP}.
\end{proof}

\begin{pf} {\it Theorem \ref{brt4kproj}}. Fix $Y\in\Htp\FlatR$. Construct $Y\to Z_1$, with 
$Z_1\in\Prd_2(\CS)$ as in Lemma \ref{z2}. Inductively the map $Y\to Z_n$, with $Z_n\in\Prd_{2n}(\CS)$, 
$n\in\N^*$, factors as $Y\to Z_{n+1}\to Z_n$, with $Z_{n+1}\in\Prd_{2(n+1)}(\CS)$, according to  
Lemma \ref{z3}. The argument used in the proof of \cite[Theorem 4.7]{NKP} leads to a triangle 
\[X\to Y\to Z\to X[1]\] such that $X\in\Htp\ProjR^\perp$ and $Z=\holim Z_n$. 
In particular this shows that if $Y\in\T$ then $Y\cong Z=\holim Z_n$, hence $\T$ is $\CS$--cofiltered.
\qed 
\end{pf}

We end this note by pointing out that the existence of the left adjoint of the inclusion functor 
$\T\to\Htp\FlatR$ is a consequence of Theorem \ref{brt4kproj}. By now there are several proof 
of this fact (see \cite{NKP}), but the new one is deduced more conceptually 
from Brown representability. 

\begin{cor}\label{aft} Let $\U$ be a triangulated category (with small hom--sets). 
 If $F:\Htp\ProjR\to\U$ is a product preserving functor that $F$ has a left adjoint. 
In particular the inclusion functor $\T\to\Htp\FlatR$ has a left adjoint.
 \end{cor}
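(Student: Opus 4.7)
The plan is to deduce the existence of a left adjoint by representing, for each object $U$ of $\U$, the functor $\U(U,F(-))$ using the Brown representability theorem just established.

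For the main statement, I would fix $U\in\U$ and form the functor
$H_U=\U(U,F(-)):\Htp\ProjR\to\Ab$.
This functor is homological because $F$ is triangulated and $\U(U,-)$ always sends triangles to long exact sequences, and it preserves products because $F$ does so by hypothesis while $\U(U,-)$ always does. Theorem \ref{brt4kproj} then applies verbatim: $\Htp\ProjR\opp$ satisfies Brown representability, which is exactly the statement that every such $H_U$ is representable. So there is an object $L(U)\in\Htp\ProjR$ together with a natural isomorphism $H_U(X)\cong\Htp\ProjR(L(U),X)$. Routine Yoneda bookkeeping then promotes $U\mapsto L(U)$ to a functor $L:\U\to\Htp\ProjR$ and turns this family of isomorphisms into a hom-set adjunction $L\dashv F$.

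For the particular case of the inclusion $\T\to\Htp\FlatR$, I would apply the first part to the composite functor $\Htp\ProjR\stackrel{\sim}\to\T\hookrightarrow\Htp\FlatR$, where the first arrow is the equivalence recalled just before Lemma \ref{thaspd}. The equivalence obviously preserves products, and by Remark \ref{radj} the subcategory $\T$ is closed under products in $\Htp\FlatR$, so the inclusion preserves them as well. Thus the composite is product preserving, and the first part produces a left adjoint. Composing this left adjoint with a quasi-inverse of the equivalence $\Htp\ProjR\simeq\T$ yields the desired left adjoint to the inclusion.

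The only real subtlety I see is not a mathematical obstacle but a convention: the statement says ``functor'' without qualification, yet homologicity of $H_U$ requires $F$ to be a triangulated functor, which I would take to be implicit in the triangulated-categorical setting. Modulo that standing convention, the argument is a formal deduction of adjoint existence from the representability theorem, and I expect no substantive difficulty.
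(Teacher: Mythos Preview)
Your argument is correct and is precisely the standard deduction the paper has in mind: the paper does not actually write out a proof of this corollary, but the sentence preceding it says the left adjoint ``is deduced more conceptually from Brown representability'', which is exactly the represent-$\U(U,F(-))$-via-Theorem~\ref{brt4kproj} argument you give. Your handling of the particular case via the equivalence $\Htp\ProjR\simeq\T$ and Remark~\ref{radj} is also the natural one, and your caveat that $F$ should be assumed triangulated is well taken and is indeed the standing convention here.
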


\end{document}